\newtheorem{prethm}{{\bf Theorem}}
\newenvironment{thm}{\begin{prethm}{\hspace{-0.5
               em}{\bf.}}}{\end{prethm}}
\newtheorem{prepro}[prethm]{Proposition}
\newtheorem{prelem}[prethm]{Lemma}
\newenvironment{lem}{\begin{prelem}{\hspace{-0.5
               em}{\bf.}}}{\end{prelem}}
\newtheorem{precor}[prethm]{Corollary}
\newtheorem{preremark}{{\bf Remark}}
\newenvironment{rem}{\begin{preremark}\em{\hspace{-0.5
              em}{\bf.}}}{\end{preremark}}
\newtheorem{preexample}{{\bf Example}}
\newtheorem{preproblem}{{\bf problem}}
\newtheorem{preproof}{{\bf Proof.}}
\newenvironment{proof}[1]{\begin{preproof}{\rm
               #1}\hfill{$\Box$}}{\end{preproof}}
\renewcommand{\thefootnote}
\begin{document}
\title{Finite unitary ring with minimal  non-nilpotent  group of units }
\author{Mohsen Amiri, Mostafa Amini\\
{\footnotesize{\em  Departamento de Matemática-ICE-UFAM, 69080-900, Manaus-AM, Brazil}}\\
{\footnotesize{\em Department of Mathematics, Payame Noor University, Tehran, Iran}}}
\footnotetext{E-mail Address: {\tt mohsen@ufam.edu.br;\,mamini1356@yahoo.com} }
\date{}
\maketitle

\begin{quote}
{\small \hfill{\rule{13.3cm}{.1mm}\hskip2cm} \textbf{Abstract.}
  Let $R$ be a finite unitary   ring such that $R=R_0[R^*]$,  where $R_0$ is the prime ring and $R^*$ is not a nilpotent group. We show  that  if   all proper subgroups of $R^*$ are nilpotent groups, then the cardinality  of $R$ is a power of  2. In addition, if   $(R/Jac(R))^*$ is not a $p$-group,   then  either $R\cong M_2(GF(2))$ or  $R\cong M_2(GF(2))\oplus A$, where $M_2(GF(2))$ is the ring of $2\times 2$ matrices over the finite field $GF(2)$ and $A$ is a direct sum of copies of the finite field $GF(2)$.   \\[.2cm]
{
\noindent{\small {\it \bf 2010 MSC}\,:16P10, 16U60, 20D15.  }}\\
\noindent{\small {\it \bf Keywords}\,: Finite ring,  Group of units, Nilpotent groups.}}\\
\vspace{-3mm}\hfill{\rule{13.3cm}{.1mm}\hskip2cm}
\end{quote}

\section{Introduction}
\ \ \ \
The relations between rings and their groups of units are an interesting research subject.
In \cite{22},  Groza has shown that  if $R$ is a finite ring and at most one simple
component of the semi-simple quotient ring $R$ by it's     Jacobson radical    $Jac(R)$  is the field of order 2, then the group of units $R^*$ is a
nilpotent group if and only if $R$ is a direct sum of two-sided ideals that are
homomorphic images of group algebras of the form $SP$, where $S$ is a particular
commutative finite ring, $P$ is a finite $p$-group, and $p$ is a prime number. More
recently,  Dolzan improved some results of Groza and described the structure of an
arbitrary finite ring with a nilpotent group of units, see  \cite{kesa}.

Let $X$ be a class of groups.  We say that a group $G$ is a minimal non-$X$-group, if $G\not\in X$, and all proper subgroups of $G$ belong to $X.$
Minimal non-$X$-groups have been studied for various classes of groups $X$. For example, minimal non-abelian groups were studied  by Miller and Moreno in \cite{Miller}, while Schmidt in \cite{Sc} studied minimal non-nilpotent groups and he characterized such finite groups.    The natural question is what we can say about  a  finite ring  whose   group of units   is a minimal non-$X$-group. In this paper, we study finite rings with minimal non-nilpotent groups of units and we prove that the order of such rings is $2^n$ for some positive integer $n$.
More precisely,  we  prove the following theorem:

\begin{thm}\label{main1}
Let  $R$  be  a   unitary ring of finite cardinality $2^am$, $m$  be  an odd number and $R^*$  a minimal non-nilpotent group. Then, $|R|=2^a$. Also, if $(R/Jac(R))^*$ is not a $p$-group, then  either $R\cong M_2(GF(2))$ or $R\cong M_2(GF(2))\oplus A$, where $A$ is a direct sum of copies of the  finite field of order two.
\end{thm}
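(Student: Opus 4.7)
The plan is a three-stage argument built on Schmidt's structure theorem for finite minimal non-nilpotent groups. In Stage~1, I reduce to $|R|=2^{a}$ by decomposing $R = R_{2} \oplus R_{\mathrm{odd}}$ into its $2$-primary and odd-primary subrings; this is possible because the Sylow components of the additive group of a finite unitary ring are cut out by orthogonal central idempotents summing to $1$. Then $R^{*} = R_{2}^{*} \times R_{\mathrm{odd}}^{*}$, and since a direct product of two nontrivial nilpotent groups is nilpotent, the minimal non-nilpotent hypothesis forces one factor to be trivial. If $R_{\mathrm{odd}}^{*} = 1$, then $R_{\mathrm{odd}} = 0$ because any nonzero finite ring of odd order has $2 \cdot 1$ as a non-identity unit, giving $|R| = 2^{a}$. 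The opposite subcase --- $R_{2}^{*} = 1$ with $R_{\mathrm{odd}}$ nontrivial --- is the delicate one: I would iterate the decomposition inside $R_{\mathrm{odd}}$ to reduce to $|R_{\mathrm{odd}}| = q^{c}$ for a single odd prime $q$, then combine Wedderburn--Artin on $R_{\mathrm{odd}}/Jac(R_{\mathrm{odd}})$ with the central involution $-1 \in R_{\mathrm{odd}}^{*}$ (contributing the prime $2$) and a Zsygmondy-type count on prime divisors of $|GL_{n}(GF(q^{k}))|$ to contradict Schmidt's two-prime constraint.

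In Stage~2, given $|R| = 2^{a}$, write $R/Jac(R) \cong \bigoplus M_{n_{i}}(F_{i})$ with each $F_{i}$ a finite field of characteristic $2$. The group $(R/Jac(R))^{*} \cong \prod GL_{n_{i}}(F_{i})$ is a quotient of $R^{*}$, so every proper subgroup of it is nilpotent, meaning it is either nilpotent or itself minimal non-nilpotent. If it were nilpotent, each $GL_{n_{i}}(F_{i})$ would have to be nilpotent, forcing $n_{i} = 1$ (since $GL_{n}(F)$ for $n \geq 2$ is never nilpotent); then $(R/Jac(R))^{*} = \prod F_{i}^{*}$ is abelian of odd order, and the not-a-$p$-group hypothesis gives at least two distinct odd primes in $|R^{*}|$, which together with the prime $2$ from either $1 + Jac(R)$ or the non-nilpotence of $R^{*}$ contradicts Schmidt's two-prime count. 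Hence $(R/Jac(R))^{*}$ is itself Schmidt, and the product-of-nilpotents dichotomy forces all but one of the factors $GL_{n_{i}}(F_{i})$ to be trivial --- equivalently, $n_{i} = 1$ and $F_{i} = GF(2)$. The surviving non-nilpotent factor $GL_{n_{0}}(F_{0})$ has order divisible by exactly two primes, and the factorization $|GL_{n}(GF(2^{k}))| = 2^{kn(n-1)/2}\prod_{j=1}^{n}(2^{kj}-1)$ combined with Zsygmondy's theorem rules out $n_{0} \geq 3$ or $k \geq 2$, leaving only $GL_{2}(GF(2)) \cong S_{3}$. Thus $R/Jac(R) \cong M_{2}(GF(2)) \oplus GF(2)^{t}$ for some $t \geq 0$.

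Stage~3 shows $Jac(R) = 0$. Assume otherwise: $N := 1 + Jac(R)$ is a nontrivial normal $2$-subgroup of $R^{*}$ with $R^{*}/N \cong S_{3}$. The preimage of the Sylow $3$-subgroup of $S_{3}$ is a proper normal subgroup of $R^{*}$ of index $2$; as a proper subgroup of the Schmidt group $R^{*}$ it is nilpotent, so its Sylow $3$-subgroup is characteristic in it and hence normal in $R^{*}$. Therefore $R^{*} = P_{3} \rtimes P_{2}$ with $P_{2}$ the Sylow $2$-subgroup containing $N$, and Schmidt's theorem requires $P_{2}$ to be cyclic. However, lifting the idempotents of $R/Jac(R) = M_{2}(GF(2)) \oplus GF(2)^{t}$ to $R$ and examining the Peirce decomposition of $Jac(R)$ produces at least two independent commuting involutions in $N$, contradicting cyclicity. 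Hence $R = R/Jac(R)$, yielding the claimed dichotomy. The main obstacle is Stage~1's odd-order subcase, which demands a careful case analysis of every non-commutative local finite ring of odd-prime-power order whose unit group has exactly two prime divisors; the non-cyclicity argument in Stage~3 is a secondary difficulty depending on the module structure of $Jac(R)$ over $M_{2}(GF(2))$.
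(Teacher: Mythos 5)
Your Stages 1--2 follow the same skeleton as the paper (primary decomposition of $R$ into its Sylow components, then Wedderburn--Artin on $R/Jac(R)$ with a two-prime count against Schmidt's theorem), and Stage 2 is essentially sound. But the ``delicate subcase'' of Stage 1 --- ruling out a ring of odd order whose unit group is minimal non-nilpotent --- is exactly where the paper does almost all of its work (its Theorems 5 and 6), and your proposed tool cannot close it. Wedderburn--Artin plus a Zsygmondy-type count on $|GL_n(GF(q^k))|$ only bites on the semisimple quotient; the dangerous case is a noncommutative \emph{local} ring of odd order whose residue field is $GF(3)$, $GF(5)$, $GF(9)$, $GF(17)$, etc., i.e.\ $q^m-1$ a power of $2$. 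There $R^*=(1+Jac(R))\rtimes C_{q^m-1}$ has order $q^s(q^m-1)$ with \emph{exactly} two prime divisors, $-1$ sits happily inside the cyclic Sylow $2$-subgroup, and Schmidt's constraints are all satisfied, so no counting argument yields a contradiction. The paper eliminates this case by a genuinely ring-theoretic analysis: reduce to a minimal counterexample, take a minimal ideal $I$, split into $Z(R)\cap I\neq 0$ and $Z(R)\cap I=0$, show $M=Jac(R)=I$ with $M^2=0$, derive $xh=-hx$ for the generator $x$ of the cyclic Sylow $2$-subgroup and all $h\in M$, and finally force $2h=0$, impossible in odd characteristic. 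You flag this as ``the main obstacle'' to be handled by ``careful case analysis,'' but that case analysis is the theorem; as written the proposal does not prove $|R|=2^a$.

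There is a second, smaller gap in Stage 3. Your contradiction rests on the claim that lifting idempotents and inspecting the Peirce decomposition of $Jac(R)$ produces ``at least two independent commuting involutions in $N=1+Jac(R)$.'' That is false in general: if $|Jac(R)|=2$ then $N\cong C_2$ has a single involution (e.g.\ in $M_2(GF(2))\oplus GF(2)[x]/(x^2)$ the second involution needed to break cyclicity of the Sylow $2$-subgroup comes from a lift of a transposition of $S_3$, not from $N$). So the non-cyclicity of $P_2$ has to be argued with an involution from outside $N$, and you would still need to prove that such a lift can be chosen to commute with, and be independent from, an involution of $N$ --- none of which is supplied. The paper avoids this entirely: in its Theorem 7(b) it takes a minimal ideal $I\subseteq Jac(R)$, uses minimality to see that $(R/I)^*$ is nilpotent, concludes that $(R/Jac(R))^*$ is a nilpotent quotient, hence all matrix sizes $n_i=1$ and $(R/Jac(R))^*$ has odd order, contradicting $2\mid |(R/Jac(R))^*|$ when the radical is nonzero. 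Either adopt that route or repair the involution argument; as it stands Stage 3 does not establish $Jac(R)=0$.
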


In this paper,  from now on  $R$ denotes a ring with an identity $1\neq 0$   and for an arbitrary finite set $X$,   $|X|$ denotes the number of elements in $X$. We denote the group of units of $R$ by $R^*$,   the order of element $x$ in $R^*$  by $o(x)$, and  the group generated by $x$ by $\langle x \rangle$.   The ring of $n\times n$ matrices over a ring $R$ is denoted by $M_n(R)$, and    the set of elements that commute with every element of $R$ is denoted  by $Z(R)$. The centralizer of the subset $X$ of $R$ is the set of all elements of $R$ which commute with every element of $X$ is denoted by  $C_R(X)$.  Also, for any
pair $a, b \in R$,  $[a, b] = ab-ba$ is the Lie product of $a$ and $b$ and $R_0[S]$ denotes the subring
of $R$ which is generated by $S\subseteq R$  over $R_0$, where $R_0$ is the prime subring of $R$. The characteristic of $R$ is denoted by $Char R$ and $GF(p^m)$ is the finite field of order $p^m$, where $p$ is a prime number.


\section{Results}
We begin with  the  following useful  lemma:

\begin{lem}\label{d}
Let $R$ be a unitary finite  local ring with a nontrivial minimal ideal $I$ and we assume that $Jac(R)$ is a commutative ideal. Then  $Jac(R) \subseteq ann_R(I)$.
\end{lem}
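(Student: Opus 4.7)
The plan is to show $Jac(R)\cdot I = 0$ and $I\cdot Jac(R) = 0$ separately, from which $Jac(R) \subseteq ann_R(I)$ follows. The key structural input is that $I$ itself lies inside $Jac(R)$: since $R$ is a finite local ring, $R/Jac(R)$ is a finite field, so every element outside $Jac(R)$ is a unit. As $I$ is a nontrivial ideal distinct from $R$ (we may assume $Jac(R)\neq 0$, since otherwise $R$ is a field and the conclusion is trivial), $I$ contains no units and hence $I \subseteq Jac(R)$. This inclusion is exactly the bridge that will let the commutativity hypothesis act.

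For $Jac(R)\cdot I = 0$, observe that $Jac(R)\cdot I$ is an ideal of $R$ contained in the minimal ideal $I$, so by minimality it equals $0$ or $I$. Because $R$ is finite and Artinian, $Jac(R)$ is nilpotent, say $Jac(R)^k = 0$; if $Jac(R)\cdot I = I$, then iteration gives $I = Jac(R)^k\cdot I = 0$, contradicting $I\neq 0$. Hence $Jac(R)\cdot I = 0$ (this is essentially Nakayama's lemma in disguise). For $I\cdot Jac(R) = 0$, the commutativity hypothesis now enters: given $x\in I$ and $j\in Jac(R)$, the inclusion $x\in Jac(R)$ combined with the commutativity of $Jac(R)$ yields $xj = jx\in Jac(R)\cdot I = 0$, so $I\cdot Jac(R) = 0$. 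Combining the two annihilation statements gives $Jac(R)\subseteq ann_R(I)$.

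I do not expect a serious obstacle. The entire argument rests on two routine facts — Nakayama's lemma (equivalently, the nilpotence of $Jac(R)$ in a finite ring) together with the inclusion $I\subseteq Jac(R)$ forced by locality — plus a one-line use of the commutativity of $Jac(R)$ to transfer annihilation from left to right. The only care required is ruling out the degenerate case in which $R$ is a field, where the statement is vacuous.
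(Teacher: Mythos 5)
Your proof is correct, and its first half is exactly the paper's argument: $Jac(R)I$ is an ideal contained in the minimal ideal $I$, hence equals $0$ or $I$ by minimality, and Nakayama's lemma (equivalently, nilpotence of $Jac(R)$ in a finite ring) rules out $Jac(R)I=I$, so $Jac(R)I=0$ --- and there the paper stops. The genuine difference is that you go on to prove $I\cdot Jac(R)=0$ as well, using the observation that locality forces $I\subseteq Jac(R)$ (after disposing of the degenerate field case) and then the commutativity of $Jac(R)$ to rewrite $xj=jx\in Jac(R)\cdot I=0$ for $x\in I$, $j\in Jac(R)$. The paper never needs this second half because it reads $ann_R(I)$ as the left annihilator --- indeed, in Subcase 1 of the proof of Theorem \ref{hh} it explicitly sets $ann_R(I)=\{r\in R:\ rs=0 \ \text{for all}\ s\in I\}$ --- so its three-line proof never invokes the commutativity hypothesis at all. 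Your version is a harmless strengthening: it delivers two-sided annihilation (and in particular the stated inclusion), and the auxiliary claims you rely on ($I\neq R$ when $Jac(R)\neq 0$, $I\subseteq Jac(R)$, and $Jac(R)I$ being a two-sided ideal contained in $I$) are all correct.
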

\begin{proof}
{ We have  $Jac(R) I \subseteq I$, and since $I$ is minimal, we deduce that  $Jac(R) I = 0$ or $Jac(R) I = I$. Since $I\neq 0$, by Nakayama's Lemma, we conclude that  $Jac(R) I\neq I$. Therefore
 $Jac(R) I = 0$.}
\end{proof}
\begin{rem}\label{ddd1}
Let $R=A\oplus B$ be a  finite ring where $A$ and $B$ are two ideals in $R$. Then $R^*=A^*\oplus B^*$ and $1=1_A+1_B$ where $1_A$ and $1_B$ are the identity elements of $A$ and $B$, respectively.
It is clear that $A^*+1_B\leq R^*$ and that $A^*+1_B\cong A^*$.
\end{rem}

\begin{lem} \label{www} Let $R$ be a finite ring. If $|R|$  is an odd number, then $R = R_0[R^*]$.
\end{lem}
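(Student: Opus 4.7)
My plan is a two-step reduction: quotient by the Jacobson radical, then handle the semisimple quotient via a counting argument in each Wedderburn factor.

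First, let $J = Jac(R)$. Since $R$ is finite, $J$ is nilpotent, so $1 + J \subseteq R^*$; consequently each $j \in J$ satisfies $j = (1 + j) - 1 \in R_0[R^*]$, whence $J \subseteq R_0[R^*]$. Units lift along the projection $R \to R/J$ by a standard Nakayama-type argument: if $\bar{u} \in (R/J)^*$ and $v$ is any preimage of $\bar{u}^{-1}$, then for any preimage $u$ of $\bar{u}$ one has $uv \in 1 + J \subseteq R^*$ and similarly $vu \in R^*$, so $u \in R^*$. Hence $R^*$ surjects onto $(R/J)^*$, and since $R_0$ surjects onto the prime subring of $R/J$, it suffices to prove the assertion for the semisimple quotient $R/J$, which still has odd order.

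Assume now $R$ is semisimple, so by Wedderburn--Artin and Wedderburn's little theorem $R \cong \prod_{i=1}^{k} M_{n_i}(F_i)$ with each $|F_i|$ odd, hence $|F_i| \geq 3$. For $r = (r_1, \ldots, r_k) \in R$, I aim to find $u \in R^*$ with $r + u \in R^*$; then $r = (r + u) - u \in R_0[R^*]$ and we are done. Since both $u \in R^*$ and $r + u \in R^*$ are coordinate-wise conditions, it suffices to produce, for each $i$, some $u_i \in M_{n_i}(F_i)^*$ such that $r_i + u_i \in M_{n_i}(F_i)^*$.

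For this last task in $M_n(F)$ with $q = |F| \geq 3$, a straightforward inclusion-exclusion gives $|M_n(F)^* \cap (M_n(F)^* - r_i)| \geq 2|M_n(F)^*| - |M_n(F)|$, which is positive whenever the density of units $\prod_{j=1}^{n} (1 - q^{-j})$ exceeds $1/2$. Using the elementary bound $\prod (1 - x_j) \geq 1 - \sum x_j$ together with $\sum_{j \geq 1} q^{-j} = 1/(q - 1) \leq 1/2$, the density is at least $1/2$ for all $q \geq 3$, and strict inequality follows from the higher-order terms (concretely, $\prod_{j=1}^{n} (1 - 3^{-j}) \geq (1 + 3^{-n})/2 > 1/2$ even in the extreme case $q = 3$). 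The main obstacle is precisely this boundary case $|F_i| = 3$, where the coarse bound is tight at $1/2$ and strict inequality must be extracted from the quadratic correction; everything else is routine book-keeping.
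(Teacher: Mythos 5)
Your argument is correct, but it is not the route the paper takes: the paper disposes of this lemma in one line by citing Lemma 1.1 of Groza (reference \cite{22}), so the entire content there is outsourced to that result, whose underlying mechanism is the classical structural fact that in a matrix ring $M_n(F)$ over a field with $|F|>2$ every element is a sum of two units (usually proved by an explicit canonical-form construction). You instead give a self-contained proof: the reduction modulo $J=Jac(R)$ (via $1+J\subseteq R^*$, lifting of units, and $J\subseteq R_0[R^*]$) is sound, and in the semisimple quotient you replace the constructive sum-of-two-units fact by an inclusion--exclusion count, using that the unit density $\prod_{j=1}^{n}(1-q^{-j})$ exceeds $1/2$ when $q\geq 3$. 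That counting step is valid, and in fact the strictness you worry about at $q=3$ is automatic without the refined product estimate: $\prod_{j=1}^{n}(1-q^{-j})\geq 1-\sum_{j=1}^{n}q^{-j}>1-\frac{1}{q-1}\geq \frac12$, since the finite geometric sum is strictly smaller than the infinite one. What the citation approach buys is brevity and a sharper statement (sum of two units works in $M_n(F)$ for all $n\geq 2$ even over $GF(2)$, which your density bound cannot see); what your approach buys is a short, fully elementary and self-contained proof that uses nothing beyond Wedderburn--Artin, Wedderburn's little theorem, and counting, and it is exactly adequate for the odd-order hypothesis of the lemma.
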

\begin{proof}
{By    Lemma 1.1 from \cite{22}, the proof is clear.}
\end{proof}

 Minimal non-nilpotent groups are characterized by Schmidt as follows:

\begin{thm}{\rm (see (9.1.1) of \cite{Rob})} \label{11} Assume that every maximal subgroup of a finite group $G$
is nilpotent but $G$ itself is not nilpotent. Then:

(i) $G$ is soluble.

(ii) $|G| = p^mq^n $ where $p$ and $q$ are unequal prime numbers.

(iii) There is a unique Sylow $p$-subgroup $P$ and a Sylow $q$-subgroup $Q$ is cyclic.
Hence $G = QP$ and $P\trianglelefteq G $.

\end{thm}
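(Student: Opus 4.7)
The statement is Schmidt's classical theorem on minimal non-nilpotent groups. My plan is a proof by induction on $|G|$, hinging on the observation that the hypothesis ``every maximal subgroup is nilpotent'' is inherited by every proper quotient: maximal subgroups of $G/N$ correspond bijectively to maximal subgroups of $G$ containing $N$, which are all nilpotent, so $G/N$ is either nilpotent or another minimal non-nilpotent group of strictly smaller order. In particular, every proper subgroup of $G$ is contained in some nilpotent maximal subgroup and so is itself nilpotent.

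For (i), if $G$ has any nontrivial proper normal subgroup $N$, then $N$ is nilpotent and $G/N$ is solvable by the induction hypothesis, so $G$ is solvable. The only residual case is $G$ non-abelian simple, which I would eliminate via Schmidt's intersection-of-maximals argument: choose distinct maximal subgroups $M_1, M_2$ with $|M_1 \cap M_2|$ maximum. If $D := M_1 \cap M_2 \neq 1$, then since $M_1, M_2$ are nilpotent we have $N_{M_i}(D) \supsetneq D$, so $N_G(D)$ is a proper subgroup (else $D \trianglelefteq G$) lying in some maximal $M_3$; then $|M_3 \cap M_j| > |D|$ for $j = 1$ or $2$, a contradiction. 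Hence distinct maximal subgroups meet trivially. Since $G$ is non-nilpotent, hence non-cyclic, every element lies in some maximal subgroup; using that each maximal subgroup is self-normalizing (a strictly larger normalizer would equal $G$), an element-count over conjugacy classes of maximal subgroups yields $\sum_i [G : M_i] = (k-1)|G| + 1$, which together with $[G : M_i] \le |G|/2$ is incompatible for every value of $k \ge 1$.

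With $G$ solvable I would obtain (ii) and (iii) by analyzing a minimal normal subgroup $N$, necessarily an elementary abelian $p$-group. By induction $G/N$ is either nilpotent or again minimal non-nilpotent with a cyclic $q$-complement; in either case one pulls back to conclude that $G$ possesses a unique normal Sylow $p$-subgroup $P$, that $|G| = p^m q^n$ for exactly two primes $p, q$, and that $G = PQ$ for a Sylow $q$-subgroup $Q$. For the cyclicity of $Q$, suppose not; then $Q$ has two distinct maximal subgroups $Q_1, Q_2$, so (since $P$ is normal) $PQ_1$ and $PQ_2$ are subgroups of $G$ of prime index $q$, hence maximal, hence nilpotent by hypothesis. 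Nilpotence of $PQ_i$ forces $[P, Q_i] = 1$, so $Q = \langle Q_1, Q_2 \rangle$ centralizes $P$ and $G = P \times Q$ is nilpotent, contrary to assumption.

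The main obstacle is the solvability step: ruling out a simple counterexample requires the intersection-of-maximals combinatorics followed by the numerical count, both delicate. The structural parts (ii) and (iii) are comparatively mechanical once solvability is secured and the inductive hypothesis is available for $G/N$.
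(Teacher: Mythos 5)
The paper does not prove this statement at all: it is quoted verbatim from Robinson (9.1.1), so there is no in-paper argument to compare against, and the relevant question is only whether your blind reconstruction of Schmidt's theorem is sound. It essentially is, and it is in fact the classical proof one finds in the cited source. Your solubility argument is correct: the hypothesis passes to quotients, a minimal counterexample must be simple, the normalizer-growth argument on a pair of distinct maximal subgroups $M_1,M_2$ with $|M_1\cap M_2|$ maximal forces distinct maximal subgroups to intersect trivially (choosing $j$ with $M_3\neq M_j$ makes your "$|M_3\cap M_j|>|D|$" step precise), and the count $\sum_i [G:M_i]=(k-1)|G|+1$ with $[G:M_i]\le |G|/2$ is indeed impossible for every $k\ge 1$. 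The cyclicity argument for $Q$ via two maximal subgroups $Q_1,Q_2$ generating $Q$, with $PQ_i$ proper hence nilpotent hence $[P,Q_i]=1$, is also correct. The one place you wave your hands is "one pulls back to conclude" for (ii) and the normality of $P$: this is fillable, but it is where the actual work sits. Concretely, with $N$ minimal normal (elementary abelian $r$-group): if $G/N$ is nilpotent, the preimage of its Sylow $r$-complementary structure plus Schur--Zassenhaus gives $G=P\rtimes K$ with $K$ nilpotent, and your same centralizing trick applied to $P\cdot(\text{Sylow subgroups of }K)$ forces $K$ to be a $q$-group; if $G/N$ is again minimal non-nilpotent, one must check $r\in\{p,q\}$ (a third prime is excluded by taking a Hall subgroup complementing $P$ and applying the same trick) and that the preimage of the normal Sylow $p$-subgroup of $G/N$, being proper and nilpotent when it is not all of $G$, has its Sylow $p$-subgroup characteristic, hence normal in $G$. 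None of this fails, but as written your sketch asserts the conclusion of this step rather than proving it; spelled out as above, the argument is complete and matches the standard treatment the paper cites.
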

Let    $R$  be  a finite ring  with $|R|$  an odd number. Then, in the following theorem, we show that  $R^*$ is not a minimal non-abelian group.
\begin{thm}\label{hh}
Let $R$ be a finite ring of order $m$ with  $m$ an odd number. If every proper subgroup of  $R^*$ is an abelian group, then $R$ is a commutative ring.

\end{thm}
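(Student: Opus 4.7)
By Lemma \ref{www} we have $R = R_0[R^*]$, and since $R_0 \subseteq Z(R)$, this reduces the claim to showing that $R^*$ is abelian. I argue by contradiction, assuming $R^*$ is not abelian: then every proper subgroup is abelian, so $R^*$ is a \emph{minimal non-abelian} group. Since $|R|$ is odd, $\text{char}(R)$ is odd and $-1 \neq 1$ is a central involution of $R^*$, hence $|R^*|$ is even.

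The first case to rule out is that $R^*$ is nilpotent. A nilpotent minimal non-abelian group must be a $p$-group (otherwise it is the direct product of proper Sylow subgroups, each abelian, making the whole group abelian), and evenness of $|R^*|$ forces $p = 2$. From the Wedderburn decomposition $R/Jac(R) \cong \prod_i M_{n_i}(GF(q_i))$ with the $q_i$ odd, together with the surjection $R^* \twoheadrightarrow (R/Jac(R))^*$ whose kernel $1 + Jac(R)$ has odd order (as it divides $|R|$), the hypothesis that $|R^*|$ is a $2$-power forces $Jac(R) = 0$; then each $|GL_{n_i}(q_i)|$ is a $2$-power with $q_i$ odd, which forces $n_i = 1$ (since for $n \geq 2$ the factor $q_i$ appears in $|GL_n(q_i)|$). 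Hence $R$ is a product of finite fields and therefore commutative, contradicting $R^*$ non-abelian.

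Otherwise $R^*$ is minimal non-nilpotent, and Theorem \ref{11} gives $R^* = P \rtimes Q$ with $P$ the normal Sylow $p$-subgroup and $Q$ a cyclic Sylow $q$-subgroup, $p \neq q$. I would refine the structure as follows. $P$ is abelian (as a proper subgroup); the Frattini subgroup $\Phi(P)$ is $Q$-invariant, and if $\Phi(P) \neq P$ then $\Phi(P) Q$ is a proper, hence abelian, subgroup, whence $\Phi(P) \subseteq C_P(Q)$. By Maschke's theorem (since $\gcd(|P|,|Q|) = 1$), $P = C_P(Q) \times [P,Q]$, and the same proper-subgroup-is-abelian argument applied to $[P,Q]$ yields $[P,Q] = P$; thus $C_P(Q) = 1$ and $P$ is elementary abelian. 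Since $Z(R^*) = C_P(Q) \times C_Q(P) = C_Q(P) \subseteq Q$, the central involution $-1$ forces $q = 2$. In $Q = \langle y \rangle$, the unique maximal subgroup is $\langle y^2 \rangle$; because $P \langle y^2 \rangle$ is proper in $R^*$ and hence abelian, $\langle y^2 \rangle \subseteq C_Q(P)$, and as $C_Q(P) \neq Q$ we get $C_Q(P) = \langle y^2 \rangle$. Thus $Q/C_Q(P) \cong C_2$ acts faithfully on $P$ with trivial fixed subspace (by $C_P(Q) = 1$), so $y$ acts on $P$ by inversion: $y x y^{-1} = x^{-1}$ for every $x \in P$.

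To obtain the final contradiction I would bring in the ring structure. Since $R^*$ is minimal non-abelian, $R$ must be ring-theoretically indecomposable (otherwise $R^* = A^* \times B^*$ with both factors non-trivial in odd characteristic, making $R^*$ abelian), so $\text{char}(R) = p_0^a$ is a prime power; a further Wedderburn analysis, using that the coprime-to-$p_0$ part of $|R^*|$ is $2^n$, forces $p = p_0$. Thus each $x \in P$ is a unipotent unit with $x - 1$ nilpotent, and the inversion relation gives $y(x-1)y^{-1} = -(x-1)x^{-1}$. A useful consequence is that $x + x^{-1} \in Z(R)$ for every $x \in P$ (because $y$ swaps $x$ and $x^{-1}$ while both commute with $P$), providing a rich central subring inside $R_0[P]$. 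The main obstacle I expect is converting these data into a concrete contradiction with $|P| > 1$; the route I would pursue is to analyze the Peirce decomposition of $R$ relative to lifts of the primitive idempotents of $R/Jac(R)$, and show that any non-trivial $P$ forces $R^*$ to contain a proper non-abelian subgroup (for instance a dihedral-type subgroup coming from $\langle x, y^{2^{n-2}} \rangle$ when $|Q| \geq 4$), contradicting minimal non-abelianness.
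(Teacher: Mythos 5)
Your group-theoretic reduction is fine (and in places sharper than what the paper records): the nilpotent case via $|1+Jac(R)|$ odd versus $|R^*|$ a $2$-power is clean and correct, and the refinement $P$ elementary abelian, $C_P(Q)=1$, $q=2$, $C_Q(P)=\langle y^2\rangle$, with $y$ acting by inversion, is all valid. The problem is that the proof stops exactly where the theorem becomes hard. Note what your own reduction shows: since $\langle x,y\rangle$ is non-abelian for every nontrivial $x\in P$, minimality forces $\langle x,y\rangle=R^*$, hence $P\cong C_p$ and $R^*\cong C_p\rtimes C_{2^n}$ with the generator inverting $P$ and $\langle y^2\rangle$ central. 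Such a group \emph{is} genuinely minimal non-abelian (its maximal subgroups are $P\times\langle y^2\rangle$ and the conjugates of the cyclic Sylow $2$-subgroups), so the route you propose for the final contradiction --- producing a proper non-abelian subgroup, e.g.\ a ``dihedral-type'' $\langle x,y^{2^{n-2}}\rangle$ --- cannot work: for $n\geq 3$ that subgroup is abelian because $y^2$ already centralizes $P$, and for $n=2$ it is the whole group. No group-theoretic contradiction exists at this stage; the contradiction must come from the ring, i.e.\ from the assumption that this group is the full unit group of a ring of odd order generated by its units. In addition, the step ``a further Wedderburn analysis forces $p=p_0$'' is asserted rather than proved, though that is a repairable gap compared with the main one.

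For comparison, this is precisely where the paper's proof does its real work: it reduces to $R$ local with maximal ideal $M=Jac(R)$, takes a minimal ideal $I$, and splits into the cases $Z(R)\cap I\neq 0$ (where $I$ is shown central, giving $[R,R]\subseteq Z(R)$ and hence $R^*$ nilpotent, a contradiction) and $Z(R)\cap I=0$ (where one shows $M=I$, $M^2=0$, $1+M=P$, that $x^2$ and more generally $(x+h)^2$ are central, hence $xh=-hx$ for all $h\in M$, and finally, using $x+1=x^t$ with $t$ odd, that $2h=0$ for all $h\in M$ --- impossible in odd characteristic). Some argument of this ring-theoretic kind, exploiting oddness of $|R|$ and $R=R_0[R^*]$ against the relation $yxy^{-1}=x^{-1}$, is indispensable; your Peirce-decomposition suggestion is only a direction, not an argument, so as it stands the proposal does not prove the theorem.
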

\begin{proof}{Consider the finite ring $R$
which is minimal subject to hypothesis of the theorem. Since every maximal subgroup of $R^*$ is  abelian,  $R^*$ is a minimal non-abelian group. By Lemma of \cite{1112}(p.512), every unitary  ring    of order $p$ or  $p^2$ for prime number $p$,  is a commutative ring. So we may assume  that  $|R|\not\in\{p,p^2\}$. Let   $S$ be a proper subring of $R$.   From Lemma \ref{www}, it follows  that $R=R_0[R^*]$, and hence $S^*\neq R^*$. By assumption,  $S^*$ is an abelian  group and by Lemma \ref{www}, $S=S_0[S^*]$  is a commutative ring.
 So every proper subring of $R$ is a commutative ring.
 Let $|R|=p_1^{\alpha_1}...p_k^{\alpha_k}$ be the canonical decomposition of $|R|$ to the prime numbers $p_i$. Then we know that $$R = R_1\bigoplus R_2\bigoplus ...\bigoplus R_k,$$ where each ideal $R_i$ is of order $p_i^{\alpha_i}$. Let $H_i$ be a subgroup of $R^*$ such that $H_i\cong R_i^*$ for all $i$.  If $k>1$, then $ H_i  $ is an  abelian subgroup of $R^*$. By minimality of $R$,  $R_i$ is a commutative ring for all $i$ and then $R$ is a commutative ring, which is a contradiction. So, suppose that $|R|=p^{\beta}$, where $p>2$ is a prime number.
 We have the following two cases with respect  to the  Jacobson radical,  either $Jac(R)=0$ or $Jac(R)\neq 0$:

 {\bf Case 1. } Let  $Jac(R)=0$. By  Wedderburn Structure Theorem,  $R\cong \bigoplus_{i=1}^t M_{n_i}(D_i)$ where $D_i$ is a finite field.  If $t>1$, then by minimality of $R$ and Remark \ref{ddd1}, every $M_{n_i}(D_i)$ is  commutative, and so $R$ is  commutative, which is a contradiction. It follows that $t=1$, and so $R\cong M_n(D)$,  where $D$ is a finite field and $n$ is a positive integer.
Since $R$ is  not a commutative ring, we have   $n>1$.  So, $(M_n(D))^*\cong R^*$ implies  that $R^*$ is  not a nilpotent group, hence $R^*$ is a minimal non-nilpotent group. But any Sylow $2$-subgroup of $R^*$ is neither  cyclic nor normal, which is a contradiction by Theorem \ref{11}(iii).

  {\bf Case 2.} Let  $Jac(R)\neq 0$.
 First suppose that $R^*$ is a nilpotent group.
 Since $-1\in R^*$ and $o(-1)=2$,  we deduce that $2\mid |R^*|$.  Since $Jac(R)\neq 0 $, by Lemma 1.2 of \cite{22}, $1+Jac(R)$ is a $p$-group. Let $P\in Syl_p(R^*)$ and $K$ be a subgroup of  $R^*$ such that $R^*=PK$ and $P\cap K=1$. By assumption, $P$ and $K$ are abelian groups, and  hence $R^*$ is an abelian group. Thus by Lemma \ref{www}, $R=R_0[R^*]$ is commutative, which is a contradiction. It follows that  $R^*$ is not a nilpotent group, and it follows that $R^*$ is a minimal non-nilpotent group.
 By Theorem \ref{11}, $|R^*|=r^m q^n$  where $r,q$ are  prime numbers.
 Also, since $2\mid |R^*|$ and  by Lemma (1.2) of \cite{22}, $1+Jac(R)$ is a $p-$group, we may assume that $r=p$ and $q=2$.
  On the other hand, $({R}/{Jac(R)})^*={R^*+Jac(R)}/{Jac(R)}$, and so every proper subgroup of $({R}/{Jac(R)})^*$ is an abelian group.  By minimality of $R$,  ${R}/{Jac(R)}$ is a commutative ring and hence, we deduce that $[R,R]\subseteq Jac(R)$.  Let $P\in Syl_p(R^*)$ and $Q\in Syl_2(R^*)$. By Theorem \ref{11}(iii), either $P\lhd R^*$ or $Q\lhd R^*$. We claim that $P\lhd R^*$. Otherwise,  by Theorem \ref{11}(iii), $P=\langle z\rangle$ is a cyclic  subgroup of $R^*$, where $z\in P$.  Since $1+Jac(R)\leq P$, there is a positive  integer $i$ such  that $H=1+Jac(R)=\langle z^i\rangle$. Since $H\lhd R^*$ and $R^*$ is a non-nilpotent group,  $H\neq P$. Since
  $HQ$ is an abelian subgroup of $R^*$ and $R=\langle z, Q\rangle$, we have $H\leq Z(R^*)$, and  so $H\leq Z(R)$. Consequently,
  $Jac(R)\subseteq Z(R)$. Since  $[R,R]\subseteq Jac(R)$, we have $uv-vu\in Jac(R)$ for all $u,v\in R^*$. Thus, it follows that  $uvu^{-1}v^{-1}-1\in Jac(R)\subseteq Z(R)$,  and so  $uvu^{-1}v^{-1}\in Z(R)$ for all $u,v\in R^*$. Hence $R^*$ is a nilpotent group, which is a contradiction. Therefore $P\lhd R^*$, as claimed. By Theorem \ref{11}(iii), $Q=\langle x \rangle$ for some $x\in Q$. Now, we claim that $R$ is a local ring.
 Let $\{M_1,...,M_k\}$ be the set of all maximal ideals of $R$, where $k>1$. Since ${R}/{Jac(R)}={R}/{(M_1\cap ...\cap M_k)}\cong {R}/{M_1}\times ....\times {R}/{M_k}$, we conclude that $({R}/{Jac(R)})^*\cong ({R}/{M_1})^*\times ....\times ({R}/{M_k})^*$.
  Let $\overline{Q}\in Syl_2( ({R}/{M_1}\times ....\times {R}/{M_k})^*)$.
  Then, since $2\mid |({R}/{M_1})^*|$ for all $1\leq i\leq k$, we see that $\overline{Q}$ is not a cyclic group. Also, we have $({R}/{Jac(R)})^*={R^*+Jac(R)}/{Jac(R)}={PQ+Jac(R)}/{Jac(R)}$, and so $Q$ is not a cyclic group,
 which is a contradiction. Hence $k=1$, as claimed. Let $M=Jac(R)$. Since $1+M$ is an abelian group,   $M$ is a commutative ideal. Since $R/M$ is a finite field, $M$ is not a central ideal. So, there exists  $w\in M$ such that $wx\neq xw$. By minimality of $R$, we have $R=R_0[ w,x ]$. Let $I$ be a minimal ideal of $R$.
 We follow the proof by separating   two  subcases, either $Z(R)\cap I\neq 0$ or  $Z(R)\cap I=0$:

{\bf Subcase 1.}  Let $0\neq a\in Z(R)\cap I$. By Lemma  \ref{d}, $M\subseteq ann_R(I)=\{r\in R:\ rs=0 \ for \ all \ s\in I \}$. Since $R/M$ is a finite field,  $M= ann_R(I)$. It follows from  $a\in Z(R)$ that  $I=Ra$ is two sided ideal. Since $R/M$ is a finite field,   $(R/M)^*=\langle x+M\rangle$ for some $x\in R$ with $gcd(o(x+M),p)=1$. Let $y\in R\setminus M$. Therefore  $y+M=x^i+M$ for some integer $0\leq i\leq n-1$. Then  $y=x^i+s$ for some $s\in M$ and this, implies that $ya=x^ia+sa=x^ia$. So, we have $I=\{0,xa,...,x^na\}\subseteq M$.
Since  $xx^ia=x^iax$,  $w(x^ia)=(x^ia)w$ and $R=R_0[x,w]$, we conclude  that $x^ia\in Z(R)$, and so $I\subseteq Z(R)$.  The minimality of $R$ implies that ${R}/{I}$ is a commutative ring, and consequently $[R,R]\subseteq I\subseteq Z(R)$. Thus, we have
   $uvu^{-1}v^{-1}-1\in I\subseteq Z(R)$ for all $u,v\in R^*$. It follows that $uvu^{-1}v^{-1}\in  Z(R^*)$, and hence $R^*$ is a nilpotent  group, which is a contradiction.

{\bf Subcase 2.} Let $ Z(R)\cap I=0$ and $0\neq b\in I$. Since  $R=R_0[w,x]$ and $M$ is commutative, we have $bw=wb$, and so    $[b,x]\neq 0$. Therefore $R=R_0[ b,x ]$. We may assume that $b=w\in I$ and $m_1,m_2\in M$. Since  $M$ is a commutative ring and $xm_1,\, m_2x\in M$, we have
 $$(xm_1)m_2=m_2(xm_1)=(m_2x)m_1=m_1m_2x.$$ Since $R=R_0[ x,w]$ and $wm_1m_2=m_1m_2w$, we conclude that   $M^2\subseteq Z(R)$.  If $M^2\neq 0$, then by minimality of $R$,  ${R}/{M^2}$ is a commutative ring, and so $0\neq [R,R]\subseteq M^2\cap I$. Since $I$ is a minimal ideal and $M^2$ is an ideal,
 $I\subseteq M^2\subseteq Z(R)$, which is a contradiction. Hence  $M^2=0$, and so by considering $R$ as a local ring,  for all $s\in M\setminus\{0\}$,  we have  $M\subseteq ann_R(s)$. We claim that $I=M$, otherwise consider $l\in M\setminus I$. Since $R=R_0[x,w]$, we have  $l=\sum n_ix^i+c$, where
 $c\in I$ and $n_i\in R_0$. Therefore $l-c=\sum n_ix^i\in M$.  Then $\sum n_ix^i\in Z(R)$.  If $l-c\neq 0$, then by minimality of $R$, we have that $R/R(l-c)$ is a commutative ring. By a similar argument to the one subcase 1, we have $R(l-c)\subseteq Z(R)$. From $0\neq [R,R]\subseteq R(l-c)\cap I$, where $I$ is a minimal ideal,   follows  that $I\subseteq R(l-c)\subseteq Z(R)$, which is a contradiction. Then
 $l-c=0$, which is a contradiction. Therefore $M=I$.  Since $R/M$ is a finite field, we have $R=R^*\cup M$.
  Then $|R^*|=|R|-|M|=o(x)|P|$. Let $|R/M|=p^{m}$. Then $|R|=p^m|M|$, and so $|R|-|M|=(p^m-1)|M|$, consequently,  $1+M=P$.
Since $P\langle x^2 \rangle$ is an abelian subgroup of $R^*$ and $R=R_0[ w, x]$, we conclude that $x^2\in Z(R)$. If
 $p\mid o(x+h)$ for some $h\in P$, then   $e=(x+h)^{\frac{o(x+h)}{p}}\in P= 1+M$, and so $e-1\in  M$.  Since $R=R_0[ x+h, w]$ and $M$ is commutative, $e-1\in Z(R)$. Therefore $M\cap Z(R)\neq 0$, which is a contradiction. It follows that $gcd(o(x+h),p)=1$ for all $h\in M$.
By a similar argument to the one  above,  $(x+h)^2\in Z(R)$ for all $h\in M$. Then $x^2+xh+hx+h^2\in Z(R)$. Since $h^2=0$ and $x^2\in Z(R)$, we have $xh+hx\in Z(R)\cap M=0$.  Therefore
   $xh=-hx$ for all $h\in M$. Let $0\neq h\in M$.    Since $({R}/{M})^*=\langle x+M\rangle$, we deduce that
   $x+1+M=x^t+M$ for some integer $t$, and then $q=x^t-x-1\in M$. Since $M$ is a commutative ideal, $qw=wq$. It follows from $qx=xq$ and from $R=R_0[ x,w]$ that
 $q=x^t-x-1\in M\cap Z(R)=0$. So,
   $x+1=x^t$. Since $x+1=x^t\not\in Z(R)$, we deduce that $t$ is an odd number. Then, we have  $x^th=(-1)^thx^t=-hx^t$.  Therefore,
   $(x+1)h=-h(x+1)$. But $(x+1)h=xh+h=-hx+h=-h(x+1)$. Hence  $2h=0$, and so $h=0$ for all $h\in M$, which is a  contradiction.}

\end{proof}

\begin{thm}\label{thm222}
Let $R$ be a finite ring of order $p^m$ where $p$ is an odd prime number. If every proper subgroup of  $R^*$ is  nilpotent, then $R$ is a commutative ring.

\end{thm}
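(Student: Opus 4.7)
The plan is to mimic the proof of Theorem \ref{hh} closely, replacing ``every proper subgroup abelian'' by ``every proper subgroup nilpotent'' throughout. I take $R$ to be a minimal counterexample: the smallest non-commutative ring satisfying the hypothesis. Since $R$ is non-commutative and $R = R_0[R^*]$ by Lemma \ref{www}, $R^*$ is non-abelian. Following the overall strategy of the paper, I treat the substantive case in which $R^*$ is not itself nilpotent, so that $R^*$ is genuinely minimal non-nilpotent and Schmidt's theorem (Theorem \ref{11}) applies. Any proper subring $S$ has $S^* \subsetneq R^*$ (else $R_0[R^*] \subseteq S$), hence $S^*$ is nilpotent. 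Since $-1 \in R^*$ has order $2$, one has $2 \mid |R^*|$, and Theorem \ref{11} gives $|R^*| = r^a s^b$ for two primes with a unique normal Sylow $r$-subgroup and a cyclic Sylow $s$-subgroup.

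Split on $Jac(R)$. If $Jac(R) = 0$, Wedderburn writes $R \cong \bigoplus_i M_{n_i}(D_i)$. When $t \ge 2$, Remark \ref{ddd1} embeds each $M_{n_i}(D_i)^*$ as a proper subgroup of $R^*$, hence nilpotent; but $GL_n(D)$ is nilpotent only for $n = 1$, so $R$ is a product of fields, commutative, a contradiction. When $t = 1$ and $n \geq 2$, the Borel subgroup of $GL_n(D)$ over the odd-order field $D$ is a proper non-nilpotent subgroup of $R^*$, again contradicting the hypothesis. So $Jac(R) \neq 0$: Lemma 1.2 of \cite{22} makes $1 + Jac(R)$ a nontrivial $p$-group, so $p \mid |R^*|$, and the two primes in Schmidt's theorem are exactly $p$ and $2$. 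Applying the same matrix-summand analysis to $(R/Jac(R))^*$ forces $R/Jac(R)$ to be a product of fields, hence $[R,R] \subseteq Jac(R)$.

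The remaining reduction follows Theorem \ref{hh}. First, $R$ is local: otherwise $R/Jac(R)$ has at least two field summands, and its Sylow $2$-subgroup would be a nontrivial direct product of cyclic groups (each $GF(p^{k_i})^*$ has even order), hence non-cyclic, contradicting the cyclic Sylow $2$-subgroup of $R^*$. Next, the Sylow $p$-subgroup coincides with $1 + M$ (writing $M = Jac(R)$) and is normal in $R^*$: otherwise, by Theorem \ref{11}(iii), $P$ is the cyclic Sylow, so $1 + M \leq P$ is cyclic and central in $R^*$, forcing $Jac(R) \subseteq Z(R)$ and, together with $[R,R] \subseteq Jac(R)$, making $R^*$ nilpotent, a contradiction. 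Writing $R/M = GF(p^k)$ and $Q = \langle x \rangle$, the minimal-ideal analysis of Theorem \ref{hh}'s Subcases 1 and 2 then carries over: pick $w \in M$ with $wx \neq xw$ (exists because $R/M$ is a field), reduce to $R = R_0[w, x]$ by minimality, and split on whether a minimal ideal $I$ meets $Z(R)$ nontrivially, arriving in each case at $R^*$ nilpotent, the final contradiction.

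The main obstacle is that Theorem \ref{hh}'s endgame uses crucially ``$1 + M$ is abelian, hence $M$ is a commutative ideal'', from which $x^2 \in Z(R)$, $(x+h)^2 \in Z(R)$, and eventually $xh = -hx$ follow. Here $1 + M$ is only guaranteed nilpotent, so $M$ may not be commutative, and these intermediate relations must be re-established. The natural strategy is to trade abelianness for nilpotency plus further minimality: proper subgroups such as $\langle x^2 \rangle P$ and $\langle x + h \rangle P$ are nilpotent by hypothesis, and combined with $R = R_0[w, x]$ and the filtration of $R$ by powers of $M$ this should still force $x^2, (x+h)^2 \in Z(R)$ via more careful commutator computations. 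Successfully propagating ``nilpotent'' through the detailed calculations of Theorem \ref{hh} is where the bulk of the new technical work lies.
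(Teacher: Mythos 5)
Your set-up (the Wedderburn analysis when $Jac(R)=0$, identifying the two primes as $p$ and $2$, locality via the cyclic Sylow $2$-subgroup, $P=1+Jac(R)$ normal, $Q=\langle x\rangle$) mostly mirrors Theorem \ref{hh}, but the proof is not finished: the entire final contradiction is deferred to a plan --- ``propagating nilpotent through the detailed calculations of Theorem \ref{hh}'' --- which you yourself identify as the bulk of the work and do not carry out. That is a genuine gap, and it sits exactly where the paper does something different and much cheaper. Instead of working with $M=Jac(R)$, where $1+M$ is only nilpotent and $M$ need not be commutative (the obstacle you flag), the paper takes a \emph{minimal} ideal $I\subseteq Jac(R)$. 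Then $I^2=0$, so $1+I$ is abelian, and by minimality $R/I$ is commutative, so $[R,R]\subseteq I$. If $I\cap Z(R)\neq 0$, the Subcase~1 argument of Theorem \ref{hh} gives $I\subseteq Z(R)$, hence $(R^*)'\leq Z(R^*)$ and $R^*$ is nilpotent, a contradiction. If $I\cap Z(R)=0$ and $1+I\neq P$, then $(1+I)Q$ is a proper, hence nilpotent, subgroup, so $Q$ centralizes $1+I$; picking $1+c\in (1+I)\cap Z(P)$ with $c\neq 0$ yields $c\in I\cap Z(R)$, again a contradiction. Hence $1+I=P$ is abelian and, by Theorem \ref{11}, $Q$ is cyclic; every proper subgroup of $R^*$, being nilpotent with its Sylow $p$-part inside the abelian normal $P$ and its Sylow $2$-part inside a conjugate of the cyclic $Q$, is therefore abelian, and Theorem \ref{hh} applies verbatim to give commutativity. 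None of the endgame computations ($x^2\in Z(R)$, $xh=-hx$, etc.) have to be revisited --- which is precisely the work your plan leaves open, and it is not at all clear it can be pushed through as you describe, since several of those steps ($bw=wb$, $M^2\subseteq Z(R)$, $qw=wq$) use commutativity of all of $M$, not merely nilpotency of $1+M$; the locality and $P=1+M$ reductions you do carry out are not even needed in the paper's route.

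A secondary point: you only treat ``the substantive case'' where $R^*$ is non-nilpotent. The paper makes the same tacit assumption at the start of its proof, so I do not weigh it heavily, but note that your minimality appeals (``$R/I$ is commutative'', ``$R=R_0[w,x]$'') apply the full statement of the theorem to smaller rings whose unit groups may well be nilpotent, so the case you set aside is not separable from the induction as cleanly as the phrase ``substantive case'' suggests.
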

\begin{proof}{Consider the finite ring $R$
which is minimal with respect to  these conditions, but  it  is not commutative.  Then  $R^*$ is a minimal non-nilpotent group.   By a similar argument to the one in case 1 of the previous theorem, we may assume that $Jac(R)\neq 0$.  By Theorem \ref{11}, $|R^*|=r^m q^n$  where $r$ and $q$ are  prime numbers. By Lemma 1.2 from \cite{22}, $1+Jac(R)$ is a $p$-group and then
  $r=p$.  Since $-1\in R^*$ and $o(-1)=2$,  we have $q=2$.  Let $P\in Syl_p(R^*)$ and $Q\in Syl_2(R^*)$.
 Let $I$ be a minimal ideal of $R$ that is contained in $Jac(R)$. Then $I^2=0$, and hence $I$ is  commutative.
By minimality of $R$, we have that $R/I$ is  commutative, so $[R,R]\subseteq I$. We have two cases with respect  to  $I\cap Z(R)$,  either $I\cap Z(R)\neq 0$ or $I\cap Z(R)=0$

{\bf Case 1. } Suppose that  $I\cap Z(R)\neq 0$. If $0\neq c\in I\cap Z(R)$, then by a similar argument to the one in the  subcase 1 of case 2, in the  above theorem,  $I\subseteq Z(R)$, and  then $1+I\leq Z(R^*)$. Since $uvu^{-1}v^{-1}-1\in I$, we have $(R^*)'\leq Z(R^*)$, and so $R^*$ is a nilpotent group, which is a contradiction.

{\bf Case 2. } Let $I\cap Z(R)=0$.
  First suppose that $1+I\neq P$.  Since $1+I\lhd P$, there exists
 $c\in I\setminus\{0\}$ such that $1+c\in Z(P)$. Also, $(1+I)Q$ is a proper nilpotent subgroup of $R^*$. So $1+I\leq C_{R^*}(Q)$, and hence $c\in Z(R)$, which is a contradiction. Therefore $1+I=P$ is an abelian subgroup of $R^*$.  By Theorem \ref{11}, either $P$ is cyclic  or $Q$ is cyclic. Since $1+I=P\lhd R^*$, by Theorem \ref{11}, $Q$ is  cyclic.
 Since $ P$ is an abelian group and $Q$ is   cyclic, every proper subgroup of $R^*$ is an abelian group, which is a contradiction by Theorem \ref{hh}.
}

\end{proof}

If $2\mid |R|$, then the above theorem is no longer valid. For example, let $R$ be the set of all $2\times 2$  matrices   over the finite field $GF(2)$. Then
$R^*\cong S_3$, where $S_3$ is the symmetric group of order $6$ and clearly, $S_3$ is a minimal
non-abelian group. For simplicity,  let   $\Delta$ be  the set of all rings $R$  in which  either    $ R\cong M_2(GF(2))$ or $ R\cong M_2(GF(2))\oplus A $,       where $A$ is   a direct sum of copies of the finite field of order two.
 \begin{rem}\label{ttt1} Let $Sl(2,GF(2^m))$ be the kernel of the homomorphism $det (M_n(GF(2^m))\longrightarrow GF(2^m)^*.$
 We recall  that when $m>1$, then  $Sl(2,GF(2^m))^*=((Sl(2,GF(2^m)))^*)'$, and hence for $n>1$ and $m>1$, we have that $(M_n(D))^*$ is not a minimal non-nilpotent group.
\end{rem}
\begin{thm}\label{thm2225}
Let  $R$  be  a  unitary    non-commutative ring of finite cardinality $2^{\beta}$ such that every proper subgroup of  $R^*$ is a nilpotent group.

(a) If $Jac(R)=0$, then $R\in \Delta$.

(b) If $Jac(R)\neq 0$, then
 $(R/Jac(R))^*$ is  a cyclic $p$-group for some odd prime number $p$.
\end{thm}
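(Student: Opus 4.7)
The plan is to handle (a) and (b) separately, reading the hypothesis (as the paper's context suggests) as asserting that $R^*$ itself is minimal non-nilpotent, so that Schmidt's theorem is at hand.

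For (a), since $Jac(R)=0$, Wedderburn--Artin decomposes $R \cong \bigoplus_{i=1}^{t} M_{n_i}(GF(2^{k_i}))$, whence $R^* \cong \prod_i GL_{n_i}(GF(2^{k_i}))$. Non-commutativity forces some $n_i \geq 2$; any such $GL_{n_i}(GF(2^{k_i}))$ is non-nilpotent, as it contains a copy of $GL_2(GF(2)) \cong S_3$ via an upper-left block. Because a direct factor of a minimal non-nilpotent group must be trivial whenever its complement is nontrivial and itself non-nilpotent, exactly one factor of the product can be nontrivial, and the remaining summands are then forced to be $GF(2)$. Remark \ref{ttt1} forces $k_1 = 1$ in the surviving factor, and for $n_1 \geq 3$ the natural embedding $GL_2(GF(2)) \hookrightarrow GL_{n_1}(GF(2))$ exhibits a proper non-nilpotent subgroup. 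Hence $n_1 = 2$ and $R \in \Delta$.

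For (b), set $\bar R = R/Jac(R)$; then $1 + Jac(R)$ is a nontrivial normal $2$-subgroup of $R^*$ with quotient $\bar R^*$. Since proper subgroups of $\bar R^*$ pull back to proper subgroups of $R^*$, the hypothesis descends to $\bar R$. If $\bar R$ were non-commutative, part (a) applied to $\bar R$ would give $\bar R \in \Delta$ and hence $\bar R^* \cong S_3$; then $R^*$ is minimal non-nilpotent with $|R^*| = 2^a \cdot 3$ by Schmidt and $|1+Jac(R)| = 2^{a-1}$. Schmidt's subcase in which the $2$-Sylow $T$ is normal is immediately ruled out by a size comparison (its image in $S_3$ is a trivial normal $2$-subgroup, so $T \subseteq 1+Jac(R)$, contradicting $|T| = 2|1+Jac(R)|$). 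In the surviving subcase the $3$-Sylow $\langle y \rangle$ is normal and $T$ is cyclic; the action of $T$ on $\langle y \rangle$ factors through $\mathrm{Aut}(\mathbb Z/3) \cong \mathbb Z/2$, so its unique index-$2$ subgroup $1+Jac(R)$ centralizes $y$. Lifting central idempotents from $\bar R$ yields $R = R_1 \oplus R_2$ with $\bar R_1 \cong M_2(GF(2))$ and $R_2$ a sum of copies of $GF(2)$, and by Wedderburn--Malcev $Jac(R_1)$ is an $M_2(GF(2))$-bimodule, hence a direct sum of copies of the regular bimodule. The condition $[y, Jac(R_1)] = 0$ then forces $y$ to be central in each regular bimodule summand, which fails because $y \notin Z(M_2(GF(2)))$; so $Jac(R_1) = 0$ and $Jac(R) = 0$, contradicting the hypothesis of (b). Therefore $\bar R$ is commutative.

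Commutativity of $\bar R$ gives $\bar R = \bigoplus_i GF(2^{k_i})$; lifting central idempotents decomposes $R = \bigoplus_i R_i$ with each $R_i$ local of residue field $GF(2^{k_i})$. Minimal non-nilpotency of $R^* = \prod_i R_i^*$ forces all $R_i^*$ to be trivial except one, so those $R_i \cong GF(2)$; denoting the remaining non-commutative local factor by $R_1$, we obtain $\bar R^* \cong GF(2^{k_1})^*$, cyclic of order $2^{k_1}-1$. Schmidt's theorem applied to the minimal non-nilpotent $R_1^*$ of order $2^c(2^{k_1}-1)$ forces $2^{k_1}-1$ to be a prime power $p^b$ for some odd prime $p$, and hence $\bar R^*$ is a cyclic $p$-group. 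The chief obstacle I expect is the bimodule analysis in the non-commutative $\bar R$ case: the Wedderburn--Malcev splitting is cleanest when $R$ has characteristic $2$, and in mixed characteristic $2^k$ with $k > 1$ one must replace it by a careful Peirce-decomposition argument relative to lifted idempotents while preserving the centralizer-of-$y$ obstruction used above.
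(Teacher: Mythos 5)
Your part (a) is essentially the paper's own argument (Wedderburn--Artin, at most one factor can have non-nilpotent unit group, Remark~\ref{ttt1} plus the block embedding of $GL_2(GF(2))$ pin the surviving factor down to $M_2(GF(2))$, and the remaining summands must have trivial unit groups), and your reading of the hypothesis as ``$R^*$ is minimal non-nilpotent'' is the same implicit reading the paper uses when it invokes Schmidt's theorem in (b); no complaints there. Part (b), however, takes a genuinely different route from the paper and contains real gaps. The step you lean on twice --- ``lifting central idempotents'' of $R/Jac(R)$ to get ring decompositions $R=R_1\oplus R_2$ with $R_2$ semisimple, and, in the commutative case, $R=\bigoplus_i R_i$ with each $R_i$ local --- is not a valid principle: central idempotents of $R/Jac(R)$ need not lift to \emph{central} idempotents of $R$. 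The upper triangular $2\times 2$ matrices over $GF(2)$ have commutative semisimple quotient $GF(2)\oplus GF(2)$ yet the ring is indecomposable and is not a direct sum of local rings; so any such decomposition in your situation would have to be extracted from the unit-group hypothesis, which you never do. In the non-commutative case there are further problems: the claim $Jac(R)\subseteq R_1$ is exactly what needs proof, the Wedderburn--Malcev splitting is unavailable when $char\,R=2^k$ with $k>1$ (you flag this yourself as an unresolved obstacle, so the argument is admittedly incomplete there), and even granting a splitting, the relation $[y,Jac(R)]=0$ only kills the parts of $Jac(R)/Jac(R)^2$ on which the $M_2(GF(2))$-component acts nontrivially; radical linking the $GF(2)$-components is invisible to $y$, so $Jac(R)=0$ does not follow from the centralizer computation alone --- you would have to feed the unit-group hypothesis back in to rule out such radical.

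Two further remarks. First, your commutative case can be closed without any decomposition of $R$: once $R/Jac(R)$ is a product of fields of characteristic $2$, its unit group has odd order, so $1+Jac(R)$ is a normal Sylow $2$-subgroup of $R^*$; Schmidt then forces the Sylow $p$-subgroup to be cyclic, and it maps isomorphically onto $(R/Jac(R))^*$, which gives the cyclic $p$-group conclusion directly and sidesteps the problematic lifting. Second, for comparison, the paper's own proof of (b) avoids the reduction modulo $Jac(R)$ altogether: it takes a minimal counterexample, chooses a minimal ideal $I\subseteq Jac(R)$, uses minimality to get that $(R/I)^*$ is nilpotent, deduces that units of coprime orders commute modulo $Jac(R)$, concludes that $R/Jac(R)$ has no matrix blocks and hence odd unit group, and contradicts the evenness forced by ``not a $p$-group''. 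Your strategy of first forcing $R/Jac(R)$ to be commutative via part (a) is attractive, but as written the decisive ring-theoretic steps are unproved.
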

\begin{proof}{
(a) We proceed   by induction on $\beta$.  Since $R$ is a simple artinian ring,   by the structure theorem of Artin-Wedderburn, we have   $R\cong \bigoplus_{i=1}^t M_{n_i}(D_i),$ where  every $D_i$ is a finite field.  If $t=1$, then $R\cong M_{n_1}(D)$, and clearly, $n_1=2$ and $D\cong GF(2)$, so $R\in \Delta$.  Let $t>1$ and $n_i>1$ for some $1\leq i\leq t$. By Remark \ref{ttt1}, if $M_{n_i}(D_i)$  is a minimal non-abelian group,  then, $n_i=2$ and $D_i=GF(2)$.  If for some $j\neq i$, we have  $n_j>1$, then  $R^*$ is not  a minimal non-abelian group, which is a contradiction. Therefore $M_{n_j}(D_j)\cong D_j$ for all $j\neq i$. Let $H\leq R^*$ such that $H\cong (M_{n_i}(GF(2)))^*$. If
 $|D_j^*|>1$ for some $j\neq i$, then  $R^*\neq H$, and hence $H$ is a non-nilpotent proper subgroup of $R^*$, which is a contradiction.
Consequently, $D_j\cong GF(2)$, and hence $R\in \Delta$.

(b) Suppose for a contradiction that $(R/Jac(R))^*$ is not  a $p$-group. We may assume that $|R|$ is minimal  such that  $(R/Jac(R))^*$ is not a $p$-group.
By Theorem \ref{11}, $R^*=PQ$, where $P\lhd G$ and $Q$  is a cyclic
Sylow subgroup.
  Let $I \subseteq Jac(R)$ be a minimal ideal of $R$. It is easy to see that  $char(I)=2$ and $I^2=0$.  Therefore, $1+I$ is an elementary abelian $2$-group.
  Also, $(R/I)/(Jac(R/I))\cong R/Jac(R)$ implies  that $((R/I)/(Jac(R/I)))^*$ is not a $p$-group, so by minimality of $R$,
  $(R/I)^*$ is  a nilpotent group.
   Let $p>2$ be a prime number such that
  $p\mid |R^*|$. Clearly,  $2p\mid |(R/Jac(R))^*$.   Let $\{M_1,...,M_k\}$ be the set of all maximal ideals of $R$. Then, we have
   $$R/Jac(R)\cong R/M_1\oplus ...\oplus R/M_k.$$
   Since $R/M_i$ is a simple ring,  $R/M_i\cong M_{n_i}(GF(2^{m_i}))$ for  some positive integers $n_i$ and $m_i$.
   If $n_i>1$ for some i, then $R/M_i\cong M_{n_i}(GF(2^{m_i}))$. Let   $x$ and $y$ be two arbitrary elements of $ R^*$ such that $xy\neq yx$ and $gcd(o(x),o(y))=1$.
Since  $(R/I)^*$ is a nilpotent group, we have
  $xy-yx\in I\subseteq Jac(R)$. Hence $n_i=1$ for all $i$, and so $R/M_i$ is a finite field.
  But $gcd(|(R/M_i)^*|,2)=gcd(2^{m_i}-1,2)=1$ for all $i$, so
  $2\nmid |(R/Jac(R))^*$, which is a contradiction.

}
\end{proof}

Here we give an example for the  statement  of  Theorem \ref{thm2225} part (b). Let $GF(2)[x,y]$ be the free ring generated with two elements $x$ and $y$ over finite field $GF(2)$.    Let $H$ be the ideal generated by
  $\{x^2, y^3+y+1, xy-y^2x\}$. Let $R=GF(2)[x,y]/H$, and let $I$ be the ideal generated with
  $x+H$ in $R$.
  Since $xy-yx\not\in H$,  $R$ is a non-commutative ring. Let $L$ be the ideal generated by $t^3+t+1$ in $Z_2[t]$. Since $R/I\cong Z_2[t]/L$ and
  $L$ is a maximal ideal,  $R/I$ is a finite field of order $8$. Let $(R/I)^*=\langle u+I\rangle$.
  It is easy to check that $I=\{0,x+H,ux+H,...,u^7x+H\}$.  Clearly, $1+I$ is an elementary abelian $2$-group and
  $R^*=(1+I)\langle u\rangle$ is a minimal non-nilpotent group.

 Now we are ready to prove   Theorem \ref{main1}.
\begin{proof}{   By Theorem \ref{thm222},
  $\alpha_1\geq 1$.
Let $|R|=2^{\alpha_1}p_2^{\alpha_2}...p_k^{\alpha_k}$ be the canonical decomposition of $|R|$ to the prime numbers $p_i$. Then  \[R = R_1\oplus R_2\oplus\cdots\oplus R_k,\] where each ideal $R_i$ is of order $p_i^{\alpha_i}$.  By Theorem  \ref{thm222},   $R_2\oplus\cdots\oplus R_k$ is a commutative ring, and hence  $(R_1)^*$ is a minimal non-nilpotent group. Consequently, $k=1$.   The rest of proof is clear by Theorem \ref{thm2225}.
}
\end{proof}

\end{document}